\newcommand{\D}{\mathbb D}
\newcommand{\C}{\mathbb C}
\newcommand{\T}{\mathbb T}
\newcommand{\BMOA}{\mathit{BMOA}}
\newcommand{\VMOA}{\mathit{VMOA}}
\newtheorem{theorem}{Theorem}
\newtheorem{lemma}[theorem]{Lemma}
\newtheorem{prop}[theorem]{Proposition}
\newtheorem{corollary}[theorem]{Corollary}
\newtheorem*{claim}{Claim}
\theoremstyle{remark}
\newtheorem{remark}[theorem]{Remark}
\numberwithin{equation}{section}
\numberwithin{theorem}{section}
\title{Structural rigidity of generalised Volterra operators on $H^p$}
\author{Santeri Miihkinen}
\address{Miihkinen: Department of Mathematics, \r{A}bo Akademi University, FI-20500 \r{A}bo,
Finland}
\email{santeri.miihkinen@abo.fi}
\author{Pekka J.\ Nieminen}
\address{Nieminen: Department of Mathematics and Statistics,
FI-20014 University of Turku, Finland}
\email{pekka.nieminen@utu.fi}
\author{Eero Saksman}
\address{Saksman: Department of Mathematics and Statistics, Box 68,
FI-00014 University of Helsinki, Finland}
\email{eero.saksman@helsinki.fi}
\author{Hans-Olav Tylli}
\address{Tylli: Department of Mathematics and Statistics, Box 68,
FI-00014 University of Helsinki, Finland}
\email{hans-olav.tylli@helsinki.fi}
\subjclass[2010]{47B10, 47G10}
\date{October 8, 2017}
\begin{document}

\begin{abstract}
We show that the non-compact generalised analytic Volterra operators $T_g$, where $g \in \BMOA$,
have the following structural rigidity property on the Hardy spaces $H^p$ for $1 \le p < \infty$
and  $p \neq 2$:\ if $T_g$ is bounded below on an infinite-dimensional subspace $M \subset H^p$,
then $M$ contains a subspace linearly isomorphic to $\ell^p$. This implies in particular that
any Volterra operator $T_g\colon H^p \to H^p$ is $\ell^2$-singular for $p \neq 2$.
\end{abstract}

\maketitle

\section{Introduction}

In this paper we establish some structural rigidity properties
of the generalised  Volterra-type integral operators
\[
 f \mapsto   T_g f(z) = \int_0^z f(w)g'(w)\,dw,
   \qquad z \in \D,
\]
 on the Hardy spaces $H^p$ for $p \neq 2$, where  $g$ is a given analytic function defined
 on the open unit disc $\D$ of the complex plane.
Various properties of this class of operators have been investigated
since the mid-1990s on $H^p$, as well as on many other function spaces;
see e.g.\ the surveys \cite{Aleman06} and \cite{Siskakis04}. In particular,
$T_g$  is bounded (respectively, compact) $H^p \to H^p$ if and only
if $g \in \BMOA$ (respectively, $g \in \VMOA$) according to results of Aleman and Siskakis
\cite{AleSis} for $1 \le p < \infty$ and Aleman and Cima \cite{AleCi} for $0 < p < 1$.  

Our aim is to study largeness in the linear qualitative sense of the Volterra operators
$T_g\colon H^p \to H^p$ for $p \neq 2$. 
We first recall the following general concept for
a Banach space $X$.
 A bounded operator $U\colon X \to X$ is said to
fix a copy of the given Banach space $E$, if
there is a closed subspace $M \subset X$, linearly isomorphic to $E$, and $c > 0$ so that
$\Vert Ux\Vert \ge c \cdot \Vert x\Vert$ for all $x \in M$ (that is, the restriction
$U_{|M}$ defines an isomorphism  $M\to UM$). In this direction, the first author \cite{Miihkinen}
showed that any non-compact operator $T_g$ fixes a copy of $\ell^p$ in $H^p$.
Our main result demonstrates that the non-compactness of
these operators is, in fact, very  limited in the qualitative sense.

\begin{theorem} \label{thm:main0}
Let $g \in \BMOA$ be arbitrary and $1 \le p < \infty$, $p \neq 2$. If the restriction
${T_g}_{|M}$ is bounded below on an infinite-dimensional subspace $M \subset H^p$, then
$M$ contains a linearly  isomorphic copy of $\ell^p$.
\end{theorem}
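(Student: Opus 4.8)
My plan is to exploit the dichotomy in $H^p$ for $p\neq 2$ between subspaces that embed $\ell^p$ and those that are "close to $\ell^2$" (or Hilbertian), together with the fact that non-compactness of $T_g$ is detected at a single scale of behaviour near the boundary. Concretely, suppose for contradiction that ${T_g}_{|M}$ is bounded below on an infinite-dimensional $M\subset H^p$ but $M$ contains no isomorphic copy of $\ell^p$. The first step is to invoke the structure theory of subspaces of $H^p$: by classical results (Kadec--Pełczyński for $2<p<\infty$, and the dual/Plancherel-type circle of ideas for $1\le p<2$), an infinite-dimensional subspace of $H^p$ that does \emph{not} contain $\ell^p$ must contain, for every $\varepsilon>0$, a further infinite-dimensional subspace on which the $H^p$-norm is equivalent to the $H^2$-norm with constant $1+\varepsilon$; passing to such a subspace I may assume $\|f\|_{H^p}\asymp \|f\|_{H^2}$ for all $f\in M$, and in particular $M$ is reflexive and its unit ball is weakly compact.

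The second step is to extract a normalised weakly null sequence $(f_n)$ in $M$ (possible since $M$ is infinite-dimensional and, after the reduction, isomorphic to a Hilbert space). On such a sequence I would analyse $T_g f_n$. Here the key is the integral/reproducing structure: $(T_g f_n)' = f_n g'$, and a Littlewood--Paley description of the $H^p$-norm, $\|T_g f_n\|_{H^p}^p \asymp |T_gf_n(0)|^p + \big\| \big(\int_0^1 |f_n(r\zeta)|^2|g'(r\zeta)|^2(1-r)\,dr\big)^{1/2}\big\|_{L^p(\T)}^p$, reduces the problem to controlling the measure $d\mu_g = |g'(z)|^2(1-|z|)\,dA(z)$, which is a Carleson measure precisely because $g\in\BMOA$. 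The plan is to show that along a weakly null sequence the contribution to $\|T_g f_n\|_{H^p}$ splits into a part living on compact subsets of $\D$ (which tends to $0$ since $f_n\to0$ uniformly on compacta) and a "tail" part supported near $\T$; on the tail I want to compare the $L^p$-behaviour of the square function against its $L^2$-behaviour. Since on $M$ the $p$- and $2$-norms are comparable, and since the Carleson embedding for $\mu_g$ is an $L^2(\mu_g)$ statement, I expect to be able to pass a subsequence to a sequence that is nearly disjointly supported (in the Kadec--Pełczyński sense) and derive that $\|T_g f_n\|_{H^p}\to 0$, contradicting the lower bound. Equivalently, one shows $T_g$ restricted to a Hilbertian subspace of $H^p$ is essentially a Hilbert-space operator composed with a Carleson embedding that is genuinely $2$-summing-like, hence cannot be bounded below unless $\mu_g$ fails to be a vanishing Carleson measure in a way that already forces $\ell^p$ structure.

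A cleaner route for the final contradiction, which I would pursue in parallel, is this: assume $T_g$ is bounded below on a subspace $M$ with $\|\cdot\|_{H^p}\asymp\|\cdot\|_{H^2}$. Then $S := {T_g}_{|M}$ is an isomorphism onto its range $N=T_gM\subset H^p$; since $M$ is isomorphic to $\ell^2$, so is $N$. But one can show $T_g$ maps into a space with extra smoothness — e.g.\ $T_gH^p$ lies in a space where the Hilbertian norm and the $H^p$ norm cannot remain comparable on an infinite-dimensional subspace unless that subspace already sits "inside $H^2$-like" pieces that $T_g$ cannot reach when $g\notin \VMOA$ concentrates its mass on a thin set. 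Since $T_g$ non-compact means $g\notin\VMOA$, i.e.\ $\mu_g$ is Carleson but not vanishing-Carleson, the mass of $\mu_g$ concentrates on a sequence of Carleson boxes $Q_n$ with $|Q_n|\to 0$; testing $T_g$ on reproducing kernels $k_{a_n}$ with $a_n$ the centres of these boxes produces an $\ell^p$-sequence in the image, and pulling back (using that $S$ is an isomorphism) yields an $\ell^p$-copy inside $M$ after all — the contradiction.

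The main obstacle, which I expect to occupy the bulk of the real proof, is the passage from "weakly null in the Hilbertian subspace $M$" to "effectively disjointly supported in the Kadec--Pełczyński sense relative to $\mu_g$", making rigorous the claim that the square-function mass of $f_n$ against the Carleson measure $\mu_g$ can be localised to shrinking Carleson boxes. Handling the endpoint $p=1$ separately (where there is no reflexivity and one must argue via $L^1$/Kadec--Pełczyński for the tail, or dualise using that $T_g$ on $H^1$ relates to $T_g$ on $BMOA$) will also require care, as will keeping the two regimes $1\le p<2$ and $2<p<\infty$ uniform — the former typically via duality with the latter, the latter directly.
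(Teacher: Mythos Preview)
Your proposal has two genuine gaps. First, the structural reduction in Step~1 is incorrect for $1\le p<2$: the claim that an infinite-dimensional subspace of $H^p$ not containing $\ell^p$ must contain a subspace on which $\|\cdot\|_{H^p}$ and $\|\cdot\|_{H^2}$ are equivalent is simply false. Indeed $L^p$ for $p<2$ contains isomorphic copies of $\ell^r$ for every $p\le r\le 2$, and on such copies the $L^p$ and $L^2$ norms cannot be equivalent. (A correct reduction to an $\ell^p$/$\ell^2$ dichotomy at the \emph{operator} level is Weis's theorem, mentioned in Remark~\ref{comm}(1), but you do not invoke it; and in any case that would reduce the task to proving $\ell^2$-singularity of $T_g$, which is Corollary~\ref{cor1}, derived in the paper \emph{from} the theorem rather than the other way around.) Your ``cleaner route'' via reproducing kernels is also flawed: the kernels $k_{a_n}$ need not lie in $M$, so an $\ell^p$ sequence found in $T_gH^p$ cannot be pulled back through the restriction $T_g|_M$.

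Second, and more fundamentally, even for $p>2$ where the Kadec--Pe{\l}czy\'nski reduction to a Hilbertian subspace is valid, you have no mechanism to force $\|T_gf_n\|_{H^p}\to 0$ along a weakly null sequence there. Knowing $\|f_n\|_{H^p}\asymp\|f_n\|_{H^2}$ and that $\mu_g$ is Carleson does not yield this: $T_g$ is genuinely non-compact, and the Hilbertian structure of $M$ does not transfer to $T_gM$. The paper's argument bypasses any subspace dichotomy. Its core is an analytic localisation (Proposition~\ref{prop:KTg}): for every $\eta>0$ there is a compact $E\subset\T$ with $m(\T\setminus E)<\eta$ such that $\chi_E T_g\colon H^p\to L^p(\T)$ is \emph{compact}. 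This is obtained by showing, via the Hardy--Littlewood maximal function and Egorov's theorem, that $\mu_g$ restricted to a union of Stolz domains over a large compact $K\subset\T$ is a \emph{vanishing} Carleson measure (Lemma~\ref{le:littleCarleson}), and then transferring this to $H^p$ by post-composing $T_g$ with a Riemann map onto that Stolz-domain union and interpolating from $p=2$ (Lemma~\ref{le:CphiTg}). With this localisation in hand, a direct gliding-hump argument on any normalised sequence $(f_n)\subset M$ with $f_n\to 0$ uniformly on compacta extracts a subsequence whose images $(T_gf_{n_j})$ are equivalent to the unit vector basis of $\ell^p$; since $T_g|_M$ is bounded below, so is $(f_{n_j})$. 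No contradiction argument and no Kadec--Pe{\l}czy\'nski are needed, and the conclusion is constructive.
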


We refer to Remark \ref{comm}.(1) below for a more detailed discussion of the contents 
of Theorem \ref{thm:main0}. For the moment recall only 
that for $1 \le q < \infty$ the bounded operator $U\colon X \to X$
is called \textit{$\ell^q$-singular},
denoted  $U \in \mathcal{S}_q(X)$, if $U$ does not fix any copies of $\ell^q$ in $X$.
The following corollary is an immediate consequence
of Theorem~\ref{thm:main0} and the fact that the sequence spaces
$\ell^p$ and $\ell^2$ are totally incomparable
for $p \neq 2$, see e.g.\ \cite[2.a.1 and p.~54]{LT77}.
It also answers a question from \cite[Sec.~3]{Miihkinen} in the negative.
Note further that Theorem~\ref{thm:main0} is obvious
for $H^2$, while the corollary fails for $p = 2$.

\begin{corollary} \label{cor1}
Let $1 \leq p < \infty$ and $p \neq 2$. Then
$T_g \in  \mathcal{S}_2(H^p)$
for every $g \in \BMOA$, that is, $T_g$ does not fix any linearly isomorphic
copies of $\ell^2$ in $H^p$.
\end{corollary}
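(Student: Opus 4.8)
The plan is to deduce the corollary directly from Theorem~\ref{thm:main0} by a short contradiction argument; the substantive content sits entirely in the theorem, and the corollary follows by soft Banach-space reasoning.

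First I would suppose, for contradiction, that $T_g$ fixes a copy of $\ell^2$ in $H^p$ for some $g \in \BMOA$ and some $p \in [1,\infty)$ with $p \neq 2$. Unwinding the definition, there is a closed subspace $N \subset H^p$ isomorphic to $\ell^2$ together with a constant $c > 0$ such that $\Vert T_g x\Vert \ge c\Vert x\Vert$ for all $x \in N$. In particular $N$ is infinite-dimensional and ${T_g}_{|N}$ is bounded below, so Theorem~\ref{thm:main0} produces a closed subspace $M \subset N$ that is linearly isomorphic to $\ell^p$.

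Now $M$ is an infinite-dimensional closed subspace of $N \cong \ell^2$, so $\ell^p$ embeds isomorphically as a closed subspace of $\ell^2$. But this is impossible, since $\ell^p$ and $\ell^2$ are totally incomparable for $p \neq 2$: no infinite-dimensional closed subspace of one is isomorphic to a subspace of the other (see \cite[2.a.1 and p.~54]{LT77}). This contradiction shows that no such $N$ exists, i.e.\ $T_g$ fixes no copy of $\ell^2$ in $H^p$, which is precisely the assertion $T_g \in \mathcal{S}_2(H^p)$.

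Finally I would record why the hypothesis $p \neq 2$ cannot be removed, as indicated after the statement: for $p = 2$ one may choose $g \in \BMOA \setminus \VMOA$ (for instance $g(z) = \log\frac{1}{1-z}$), so that $T_g$ is a non-compact operator on the Hilbert space $H^2$; a non-compact operator on a Hilbert space is always bounded below on some infinite-dimensional subspace and hence fixes a copy of $\ell^2$, so the corollary genuinely fails at $p = 2$. The only real obstacle in this circle of ideas is Theorem~\ref{thm:main0} itself; granted that, the deduction above is immediate.
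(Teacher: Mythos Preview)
Your proposal is correct and matches the paper's own justification essentially verbatim: the paper states the corollary as an immediate consequence of Theorem~\ref{thm:main0} together with the total incomparability of $\ell^p$ and $\ell^2$ for $p \neq 2$ (citing \cite[2.a.1 and p.~54]{LT77}), which is precisely your contradiction argument. Your added remark on the failure at $p=2$ also mirrors the paper's comment preceding the corollary.
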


In particular, $T_g\colon H^p\to H^p$ cannot fix a copy of the whole space $H^p$ itself
in the case $p\neq 2$.

It is of interest to contrast the preceding results with the behaviour
of the composition operators $f \mapsto C_\phi f = f \circ \phi$
where $\phi\colon \D \to \D$ is any fixed analytic map. It is well known that
$C_\phi$ is a  bounded operator $H^p \to H^p$, cf.\ \cite[Thm~3.1]{CMC}.
It was recently shown \cite{LNST} that for $p \neq 2$ any non-compact 
$C_\phi\colon H^p \to H^p$  fixes an isomorphic copy of $\ell^p$
in $H^p$ (akin to the case of the Volterra operators). However, there are many examples
of composition operators $C_\phi$ that fix a copy of $\ell^2$ in $H^p$ for $p \neq 2$ and
this class can be characterised in terms of the boundary behaviour of the symbol $\phi$,
see \cite[Thm~1.4]{LNST}.

Theorem~\ref{thm:main0} will be proved in Section~\ref{proof}.
In Section~\ref{Ces} we sketch as an appetiser an
argument for the $\ell^2$-singularity of the Ces{\`a}ro operator on $H^p$, or equivalently, the 
$\ell^2$-singularity of the non-compact Volterra operator $T_h$ obtained with the symbol
$h(z) = \log \frac{1}{1-z} \notin \VMOA$.
We consider it worthwhile to look at this classical example separately, since it
exemplifies a ``localisation'' of the non-compactness, which somewhat surprisingly turns out
to hold  for arbitrary  symbols $g$ in $\BMOA$ in a suitable formulation.

We refer to \cite{Duren}, \cite{Garnett} and \cite{Girela}  for
unexplained notions and results for the spaces $H^p$ and $\BMOA$,
and respectively \cite{LT77} and \cite{AK} for general Banach space theory.
We recall here only that the analytic map 
$f\colon \D \to \C$ belongs to $\BMOA$ if 
$\vert f\vert_{*} = \sup_{a \in \D} \Vert f \circ \sigma_a - f(a)\Vert_{H^2} < \infty$,
where $\sigma_a(z) = \frac{a-z}{1-\overline{a}z}$ for $z \in \D$, 
and that $\BMOA$ is a Banach space equipped with the norm 
$\Vert f \Vert = \vert f(0) \vert + \vert f\vert_{*}$. Moreover, $\VMOA$ is the closed subspace 
of $\BMOA$ consisting of the maps $f \in \BMOA$ for which 
$\lim_{\vert a\vert \to 1} \Vert f \circ \sigma_a - f(a)\Vert_{H^2} = 0$.

\section{Appetiser:\ $\ell^2-$singularity of the Ces{\`a}ro operator}
\label{Ces}

As a prototypical example of a non-compact Volterra-type integral operator we briefly consider
the classical Ces{\`a}ro operator
\[
Cf(z) = \frac{1}{z} \int_0^z \frac{f(w)}{1-w}\,dw = \frac{1}{z}T_h f(z), \quad z \in \D,
\]
where $h(z) = \log \frac{1}{1-z}$. Thus $T_h = M_zC$,
where the multiplier $f \mapsto M_zf = zf$ is an isometry on $H^p$.
A systematic study of $C$ on $H^p$ was initiated by Siskakis; see e.g.\ \cite{Siskakis87}
and \cite{Siskakis90}.

Let $\T = \partial\D$ be the unit circle.
Observe that  $h'(z) = \frac{1}{1-z}$ is bounded on $\T\setminus J$, where $J$ is any open
arc containing the point $z = 1$, and its values peak at the point $z = 1$. Intuitively speaking,
$C$ behaves like a compact operator on a major part of $\T$ and in a non-compact
fashion  near $z = 1$. It is known by \cite[Thm~1.1]{Miihkinen} that $C$ fixes a copy of
$\ell^p$.  We next outline a proof of the following special case of Corollary~\ref{cor1}.

\begin{claim}
The Ces{\`a}ro operator $C$ is $\ell^2$-singular on $H^p$ for $1 < p < \infty$ and $p \neq 2$.
\end{claim}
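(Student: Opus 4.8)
The plan is to argue by contradiction, exploiting that $h$ fails to belong to $\VMOA$ only ``at the point $z=1$''. (Although the Claim is a special case of Corollary~\ref{cor1}, the argument below avoids the machinery of Section~\ref{proof} and exhibits the localisation mechanism in its cleanest form.) First, since $Cf=z^{-1}T_hf$ and multiplication by $z$ is an isometry of $H^p$, one has $\Vert Cf\Vert_{H^p}=\Vert T_hf\Vert_{H^p}$ for every $f$, so $C$ and $T_h$ fix exactly the same closed subspaces and it suffices to show that $T_h$ fixes no copy of $\ell^2$. Assume it did: there are a subspace $M\subset H^p$ isomorphic to $\ell^2$ and $c>0$ with $\Vert T_hf\Vert_{H^p}\ge c\Vert f\Vert_{H^p}$ on $M$. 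Fixing an isomorphism $\ell^2\to M$ and normalising the images of the unit vectors produces a normalised sequence $(f_n)\subset M$, equivalent to the unit vector basis of $\ell^2$, which is weakly null --- hence $f_n\to 0$ uniformly on compact subsets of $\D$ --- and satisfies $\Vert T_hf_n\Vert_{H^p}\ge c$ for all $n$. (For orientation, on the simplest $\ell^2$-copy in $H^p$, the closed span of a lacunary set $\{z^{n_k}\}$, one has $\Vert Cz^{n_k}\Vert_{H^p}\asymp n_k^{-1/p}\to 0$, so $C$ is already not bounded below there; the point is to exclude \emph{all} $\ell^2$-copies simultaneously.)

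For the localisation, put $h_r(z)=\log\frac{1}{1-rz}$ for $r\in(0,1)$. Since $h_r$ is holomorphic on a neighbourhood of $\overline{\D}$ it lies in $\VMOA$, so $T_{h_r}$ is compact on $H^p$ and $\Vert T_{h_r}f_n\Vert_{H^p}\to 0$ as $n\to\infty$ for each fixed $r$; a diagonal choice then yields radii $r_n\uparrow 1$ with $\Vert T_{h_{r_n}}f_n\Vert_{H^p}\to 0$, whence $\Vert T_{h-h_{r_n}}f_n\Vert_{H^p}\ge c/2$ eventually. The gain is that the derivative of the new symbol is concentrated at $z=1$:
\[
(h-h_r)'(w)=\frac{1-r}{(1-w)(1-rw)},
\]
which has size $O\bigl((1-r)/|1-w|^2\bigr)$ when $|1-w|\gg 1-r$ and size $O\bigl(1/|1-w|\bigr)$ on the arc $|1-w|\lesssim 1-r$. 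Inserting this into the classical Lusin area-function estimate
\[
\Vert T_gf\Vert_{H^p}\asymp\Bigl\Vert\,\zeta\mapsto\Bigl(\,{\textstyle\int_{\Gamma(\zeta)}|f(w)\,g'(w)|^2\,dA(w)}\,\Bigr)^{1/2}\,\Bigr\Vert_{L^p(\T)}
\]
(with $\Gamma(\zeta)$ a fixed nontangential approach region and $dA$ area measure; this is the estimate underlying the boundedness criterion of \cite{AleSis}), together with the bound $\int_{\Gamma(\zeta)}|f_n(w)|^2\,dA(w)\lesssim\Vert f_n\Vert_{H^p}^2=1$, which holds because $p>1$, one finds that the part of $\Vert T_{h-h_{r_n}}f_n\Vert_{H^p}^p$ coming from $\{|1-\zeta|>\delta\}$ is $\lesssim (1-r_n)^p\,\delta^{\,1-2p}$. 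Letting $\delta=\delta_n\to 0$ slowly enough --- any $\delta_n=(1-r_n)^\beta$ with $0<\beta<1/2$ works --- this far part tends to $0$, so the contribution from the shrinking arc $J_n=\{|1-\zeta|\le\delta_n\}$ remains $\gtrsim c$.

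Consequently each $f_n$ carries a fixed proportion of its $H^p$-mass near $z=1$ on the scale $\delta_n\to 0$. Passing to a subsequence along which $\delta_n$ decreases geometrically and applying the Möbius automorphisms of $\D$ that zoom in at $z=1$ on scale $\delta_n$, the rescaled functions are normalised, retain non-escaping mass, and live on well-separated conformal scales; thus the $f_n$ form, up to perturbations negligible in $H^p$, a sequence of reproducing-kernel-type bumps at $z=1$ on geometrically separated scales. Such a sequence is equivalent to the unit vector basis of $\ell^p$ in $H^p$ --- a standard annular/Littlewood--Paley estimate of the kind used in \cite[Thm~1.1]{Miihkinen}. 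But, being a subsequence of $(f_n)$, it is simultaneously equivalent to the $\ell^2$-basis, so it spans a subspace of $H^p$ isomorphic to both $\ell^2$ and $\ell^p$; this is impossible for $p\neq 2$, since $\ell^p$ and $\ell^2$ are totally incomparable \cite[2.a.1]{LT77}. This contradiction proves the Claim.

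The principal obstacle is the transition, in the last paragraph, from the lower bound on $\Vert T_{h-h_{r_n}}f_n\Vert_{H^p}$ over the shrinking arcs to an honest $\ell^p$-basic subsequence of $(f_n)$. One must make the diagonal choices of $r_n$ and $\delta_n$ quantitatively compatible with the area-function estimates; give precise meaning to ``concentration of $f_n$ at $z=1$ on scale $\delta_n$'', keeping in mind that the relevant object is not a restriction of $f_n$ to a boundary arc (which would not be holomorphic) but the behaviour of $f_n$ after conformal rescaling, equivalently its pairing with normalised reproducing kernels at points tending to $1$; and verify the $\ell^p$-equivalence of bumps placed on geometrically separated scales at a single boundary point. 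All of this is carried out, in a form robust enough to handle an arbitrary symbol $g\in\BMOA$, in the proof of Theorem~\ref{thm:main0} in Section~\ref{proof}.
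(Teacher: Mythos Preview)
Your localisation step differs from the paper's but is essentially sound: rather than estimating the defining integral for $Cf_n$ directly, you pass to the approximants $h_r\in\VMOA$ and use the area-function description of $\|T_gf\|_{H^p}$ together with the crude pointwise bound $\int_{\Gamma(\zeta)}|f_n|^2\,dA\lesssim 1$ (valid for $p>1$). The paper instead writes $Cf_n(e^{i\theta})=\int_0^1 f_n(re^{i\theta})(1-re^{i\theta})^{-1}\,dr$, splits at $r=1-\delta$, and uses only the pointwise growth bound \eqref{eval}; this is shorter and avoids both the $h_r$ device and the area function, but your route is correct and arguably closer in spirit to what is needed for general $g$.

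The genuine gap is what you do after localisation. You try to deduce that \emph{the functions $f_n$ themselves} concentrate near $z=1$, and then to extract an $\ell^p$-basic subsequence of $(f_n)$ via M\"obius rescaling. But what you have actually established is that $|T_hf_n|^p$ (equivalently $|Cf_n|^p$) carries a fixed amount of mass on the shrinking arcs $J_n$: from $\|T_{h_{r_n}}f_n\|\to 0$ and your far-part estimate one gets $\int_{J_n}|T_hf_n|^p\,dm\gtrsim c^p$. This, combined with absolute continuity $\int_{J_k}|T_hf_n|^p\,dm\to 0$ as $k\to\infty$ for each fixed $n$, is \emph{exactly} the pair of conditions \eqref{Ces1}--\eqref{Ces2} in the paper, and a standard gliding-hump extraction (as in \cite[Prop.~3.5]{Miihkinen} or \cite[Thm~1.2]{LNST}) then yields a subsequence $(Cf_{n_k})$ equivalent to the $\ell^p$-basis. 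Since $(Cf_{n_k})$ is simultaneously equivalent to the $\ell^2$-basis (because $C$ is an isomorphism on $M$), total incomparability finishes the proof. Passing from concentration of $T_hf_n$ back to concentration of $f_n$ is neither needed nor easy: the restriction to an arc is not analytic, and the M\"obius-rescaling picture you sketch would require substantial additional work to turn into an honest $\ell^p$-equivalence of $(f_{n_k})$. In short, your localisation already delivers the input for the paper's gliding-hump conclusion applied to $(Cf_n)$; the detour through $(f_n)$ is where the argument stalls.
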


\begin{proof}
Assume to the contrary that $C$ fixes a copy of $\ell^2$. Thus there is a sequence $(f_n)$
in $H^p$ which is equivalent to the
unit vector basis of $\ell^2$, and such that $C$ is bounded below on
the closed linear span of $(f_n)$.
Put $E_\varepsilon = \{e^{it}: |e^{it}-1| < \varepsilon\}$ for
$\varepsilon > 0$. By the absolute continuity of the measures
$E \mapsto \int_E |Cf_n|^pdm$, we get that
\begin{equation}\label{Ces1}
  \lim_{\varepsilon \to 0}\int_{E_\varepsilon}|Cf_n|^p\,dm = 0
\end{equation}
for each $n$. We next claim that
\begin{equation}\label{Ces2}
\lim_{n \to \infty}\int_{\T\setminus E_\varepsilon}|Cf_n|^p \,dm = 0
\end{equation}
for each $\varepsilon > 0$. In fact, given $0 < \delta < 1$, we have
\begin{equation}\label{Ces3}
  |Cf_n(e^{i\theta})| \le \int_0^{1-\delta}
  \frac{|f_n(re^{i\theta})|}{|1-re^{i\theta}|}\,dr +
  \int_{1-\delta}^1 \frac{|f_n(re^{i\theta})|}{|1-re^{i\theta}|}\,dr,
\end{equation}
where the first  integral in \eqref{Ces3} tends to $0$ as $n \to \infty$,
since $(f_n)$ converges to zero uniformly on compact subsets of $\D$ (recall that
$f_n \to 0$ weakly in $H^p$). Moreover,  for $e^{i\theta} \notin E_\varepsilon$
the second integral in \eqref{Ces3} can be made uniformly as small as desired by
choosing $\delta > 0$ small enough, and using the estimates
$|1-re^{i\theta}| \ge c\varepsilon$ for $\varepsilon \le |\theta| \le \pi$
and some $c > 0$, as well as
\begin{equation}\label{eval}
  |f_n(z)| \le \frac{2^{1/p}\|f_n\|_{H^p}}{(1-|z|)^{1/p}}, \qquad z \in \D,
\end{equation}
for $p\in (0,\infty)$, see \cite[p.~36]{Duren}. Hence \eqref{Ces2} holds.

Since conditions \eqref{Ces1} and \eqref{Ces2} are satisfied,
and $\|Cf_n\|_{H^p} \approx 1$, we may proceed as in the proof of Proposition~3.5
in \cite{Miihkinen} (cf.\ also the proof of Theorem 1.2 in \cite{LNST}) and extract a
subsequence $(Cf_{n_k})$ which is equivalent to the unit vector basis of $\ell^p$ in $H^p$.
However, since $(Cf_{n_k})$ is also equivalent to the unit vector basis of
$\ell^2$ by assumption, the above contradicts the fact that $\ell^p$ and $\ell^2$
are totally incomparable for $p \neq 2$. This contradiction shows that
$C$ is indeed $\ell^2$-singular.
\end{proof}

As noted before,  the above concrete operator $T_h \in  \mathcal{S}_2(H^p)$ 
acts like a compact operator on most of $\T$.
We now face the problem of establishing a similar behaviour for  $T_g$ 
with an arbitrary  symbol $g \in \BMOA \setminus \VMOA$. 
In the general case sheer size estimates of the derivative $g'$ will not suffice.
For instance, by \cite{Rud55} there is a bounded analytic function
$g \in H^{\infty}$, such that the curves $\{g(re^{it}): 0 \le r < 1\}$ are unrectifiable
for almost every $e^{it} \in \T$. Instead, to overcome the basic difficulty
we will provide 
a key result for  the localisation of  non-compactness, see Proposition \ref{prop:KTg} below. 
Our starting point  is a condensation phenomenon (Lemma~\ref{le:littleCarleson}) 
shared by the Carleson measures $\vert g'(z)\vert^2 \log\frac{1}{\vert z\vert}\,dA(z)$, which allows us to isolate the ``support of the non-compactness'' on $\T$.
Nonetheless, exploiting this knowledge is not straightforward, and thus we also need  to post-compose $T_g$ in Lemma~\ref{le:CphiTg} by an auxiliary composition operator, whose symbol is geometrically carefully chosen  with respect to the ``support of the non-compactness.''

\section{Proof of Theorem~\ref{thm:main0}}
\label{proof}

We will actually establish the following more precise version of Theorem~\ref{thm:main0}.

\begin{theorem} \label{thm:main}
Let $g \in \BMOA$ be arbitrary and $1 \le  p < \infty$. Suppose that $(f_n)$ is a sequence 
in $H^p$ such that $\|f_n\|_{H^p} = 1$ for all $n$ and $f_n \to 0$ uniformly on the compact
subsets of $\D$. If
\begin{equation}\label{con1}
  \liminf_{n\to\infty} \|T_g f_n\|_{H^p} > 0,
\end{equation}
then there is a subsequence $(f_{n_j})$ such that $(T_g f_{n_j})$ is equivalent to the unit
vector basis of $\ell^p$, that is, there is constant  $c > 0$ for which
\begin{equation}\label{ellp}
   c^{-1} \cdot \Bigl( \sum_j |\alpha_j|^p \Bigr)^{1/p}
   \leq \Bigl\| \sum_j \alpha_j T_g f_{n_j} \Bigr\|_{H^p}
   \leq c \cdot \Bigl( \sum_j |\alpha_j|^p \Bigr)^{1/p}
\end{equation}
holds for all $(\alpha_j) \in \ell^p$.
\end{theorem}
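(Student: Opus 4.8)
The plan is to follow the blueprint sketched in the Ces\`aro appetiser, but replace the crude size estimates on $g'$ by the two announced structural tools (Lemma~\ref{le:littleCarleson} on condensation of the Carleson measures $|g'(z)|^2\log\frac1{|z|}\,dA(z)$, and Lemma~\ref{le:CphiTg} allowing post-composition by an auxiliary $C_\phi$). Concretely, I would first dispose of the compact case: if $g\in\VMOA$ then $T_g$ is compact, so $T_gf_n\to 0$ in $H^p$ whenever $f_n\to 0$ uniformly on compacta and $\|f_n\|_{H^p}=1$, contradicting \eqref{con1}; hence we may assume $g\in\BMOA\setminus\VMOA$. Then the condensation lemma produces a point $\zeta\in\T$ (the ``support of the non-compactness'') and a shrinking family of arcs $E_\varepsilon\subset\T$ around $\zeta$ carrying all the non-vanishing Carleson mass, in the sense that $|g'(z)|^2\log\frac1{|z|}\,dA(z)$ restricted to the complementary Carleson boxes is a \emph{vanishing} Carleson measure. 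Equivalently, the symbol $g_\varepsilon$ obtained by excising the influence of $E_\varepsilon$ lies in $\VMOA$, so $T_{g_\varepsilon}$ is compact.

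Next I would run the localisation argument. For a fixed weak-null normalised sequence $(f_n)$ satisfying \eqref{con1}, I want the two estimates that made the appetiser work: (i) $\lim_{\varepsilon\to 0}\limsup_n\int_{E_\varepsilon}|T_gf_n|^p\,dm=0$, which is \emph{not} automatic (unlike the per-$n$ absolute continuity in \eqref{Ces1}) and is exactly where Proposition~\ref{prop:KTg} on localisation of non-compactness enters — it lets me split $T_gf_n$ into a piece controlled by the vanishing Carleson mass outside $E_\varepsilon$ and a piece whose $H^p$-mass near $\zeta$ is uniformly small; and (ii) $\lim_n\int_{\T\setminus E_\varepsilon}|T_gf_n|^p\,dm=0$ for each fixed $\varepsilon$, which should follow because on the complement of $E_\varepsilon$ the measure is a vanishing Carleson measure, $T_gf_n\to 0$ uniformly on compacta, and a Carleson-embedding/evaluation estimate of the type \eqref{eval} controls the boundary tail. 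The composition-operator trick of Lemma~\ref{le:CphiTg} is presumably needed to make (i) genuinely uniform: one post-composes by $C_\phi$ with $\phi$ mapping $\D$ into a Stolz-type region anchored at $\zeta$, so that $C_\phi T_g$ sees only the ``good'' part of the symbol while remaining bounded below on $\mathrm{span}(f_n)$.

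Once (i) and (ii) are in hand, the endgame is the now-standard gliding-hump / small-perturbation extraction: combining $\|T_gf_n\|_{H^p}\gtrsim 1$ with the fact that the $L^p$-mass of $T_gf_n$ concentrates on a set (namely $E_\varepsilon$) of measure tending to $0$ as $\varepsilon\to0$, one picks a subsequence $(n_j)$ and pairwise ``almost disjoint'' (in the sense of $L^p$-supported mass) arcs $E_{\varepsilon_j}$ so that $(T_gf_{n_j})$ is a small perturbation of a disjointly supported sequence in $L^p(\T)$, hence equivalent to the $\ell^p$ unit vector basis; this is exactly the mechanism invoked in the appetiser via \cite[Prop.~3.5]{Miihkinen} and \cite[Thm~1.2]{LNST}, and it yields \eqref{ellp}. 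Theorem~\ref{thm:main0} then follows since if $T_g$ is bounded below on infinite-dimensional $M$, any normalised weak-null sequence $(f_n)\subset M$ (which exists by infinite-dimensionality of $M$ and reflexivity/separability considerations, or by passing to a basic sequence) satisfies \eqref{con1}, so $(f_{n_j})$ spans a copy of $\ell^p$ inside $M$ on which $T_g$ is an isomorphism.

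The main obstacle, and the crux of the whole argument, is establishing the uniform-in-$n$ version of (i): the per-$n$ absolute continuity that trivially gave \eqref{Ces1} in the Ces\`aro case degenerates to nothing uniform for a general weak-null sequence, so one genuinely needs the condensation of the Carleson measure (Lemma~\ref{le:littleCarleson}) together with the geometric choice of the auxiliary composition symbol (Lemma~\ref{le:CphiTg}) to force the boundary mass of $T_gf_n$ to escape the shrinking arcs $E_\varepsilon$ uniformly. Everything else — the $\VMOA$ reduction, the tail estimate (ii), and the disjointification — is either classical or a direct adaptation of the appetiser.
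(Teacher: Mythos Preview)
Your overall architecture (localise, then gliding hump plus the perturbation argument from \cite{Miihkinen,LNST}) is the paper's, but you have inverted the roles of the two key estimates, and as stated your condition (i) is actually \emph{false}. Indeed, (i) and (ii) together force $\|T_gf_n\|_{H^p}\to 0$: given $\delta>0$, pick $\varepsilon$ so small that $\limsup_n\int_{E_\varepsilon}|T_gf_n|^p\,dm<\delta$, then for that $\varepsilon$ take $n$ large so that $\int_{\T\setminus E_\varepsilon}|T_gf_n|^p\,dm<\delta$; this contradicts \eqref{con1}, and it also clashes with your own endgame paragraph, where you (correctly) say that the $L^p$-mass of $T_gf_n$ \emph{concentrates} on $E_\varepsilon$. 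The right pair of conditions is exactly the one from the appetiser, unchanged: the integral over the small bad set tends to $0$ \emph{for each fixed $n$} by plain absolute continuity (this is \eqref{Ces1}, and it remains trivial for general $g$---nothing to upgrade), while the integral over the large good set tends to $0$ \emph{as $n\to\infty$} for each fixed set. It is this second condition, your (ii), that is the hard step and that Proposition~\ref{prop:KTg} supplies: for every $\eta>0$ there is a \emph{large} compact $E\subset\T$ with $m(\T\setminus E)<\eta$ such that $\chi_E T_g\colon H^p\to L^p(\T)$ is compact, whence $\int_E|T_gf_n|^p\,dm\to 0$ along any bounded sequence tending to $0$ on compacta. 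So Proposition~\ref{prop:KTg} feeds (ii), not (i).

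Two related misreadings: the condensation lemma does not produce a single bad point $\zeta$. Lemma~\ref{le:littleCarleson} gives a large compact $K\subset\T$ on which $\mu_g$ has uniformly vanishing Carleson windows; the ``support of non-compactness'' is the small open set $\T\setminus K$, which for general $g\in\BMOA$ need not be an arc or cluster at one boundary point. Correspondingly, the auxiliary symbol $\phi$ in Lemma~\ref{le:CphiTg} is not a map into a Stolz region at a bad point: it is a Riemann map onto $\Omega=\bigcup_{\zeta\in K}S(\zeta)$, a union of Stolz domains over the \emph{good} points, chosen so that $C_\phi T_g$ sees only the vanishing-Carleson part of $\mu_g$ and is therefore compact on $H^p$. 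A harmonic-measure comparison then gives $\|\chi_E T_g f\|_{L^p}\lesssim\|C_\phi T_g f\|_{H^p}$, transferring that compactness to $\chi_E T_g$. Once the sets and the hard estimate are put back in the right places, the gliding hump runs as you describe, using only per-$n$ absolute continuity on the shrinking complements $\T\setminus E_k$.
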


Assuming Theorem~\ref{thm:main} for a moment we next derive Theorem~\ref{thm:main0}
from it.

\begin{proof}[Proof of Theorem~\ref{thm:main0}]
Suppose that the restriction ${T_g}_{|M}$ is bounded below on a closed
infinite-dimensional subspace
$M \subset H^p$. We may next pick a normalised sequence
$(f_n) \subset M$ such that $f_n \to 0$ uniformly as $n\to\infty$ on the compact subsets
of $\D$.  For the reader's convenience we recall that this is obtained by choosing  a normalised sequence $(f_n) \subset M$
such that  $f_n^{(r)}(0) = 0$ for all $r = 0, \ldots, n$ and any $n \geq 1$.
The choice is possible since $M$ is infinite-dimensional and the intersection of the kernels of the
evaluation functionals $f \mapsto f^{(r)}(0)$ for $r = 0,\ldots,n$ has
finite codimension in $M$. It is then a standard fact that
$f_n \to 0$ as $n \to \infty$ on the compact subsets of $\D$.
In fact, since $f \mapsto z^nf$ is an isometry on $H^p$, we deduce by \eqref{eval}
that for any given $0 < r < 1$ there is a constant $C(r)$ so that for any $z$ satisfying
$\vert z\vert \le r$ one has
\[
  \vert f_n(z)\vert \le C(r) \vert z\vert^n  \le C(r)  r^n \to 0
\]
as $n \to \infty$.

After these preparations Theorem \ref{thm:main} produces a subsequence $(f_{n_j})$
for which \eqref{ellp} holds for $(T_g f_{n_j})$. Since $T_g$ is bounded below on
the closed linear span $[\{f_{n_j}: j \geq 1\}] \subset M$ of
$(f_{n_j})$, it follows that $T_g$ fixes a linearly isomorphic copy of $\ell^p$ in $M$.

\end{proof}

The argument for Theorem \ref{thm:main} is based on the following proposition,
which shows that the non-compact behaviour  of $T_g$ for the symbols
$g \in \BMOA \setminus \VMOA$ is ultimately concentrated on
subsets of arbitrarily small measure of the unit circle $\T$.
As usual we view $H^p$ as a closed subspace of $L^p(\T) = L^p(\T,m)$,
where $m$ is the normalised Lebesgue measure on $\T$.

\begin{prop} \label{prop:KTg}
Let $g \in \BMOA$ and $1 \le p < \infty$. Then for every $\eta > 0$ there
exists a compact set $E \subset \T$ such that $m(\T \setminus E) < \eta$
and $\chi_E T_g\colon H^p \to L^p(\T)$ is a compact operator.

In particular, for any bounded sequence $(f_n) \subset H^p$ such that
$f_n \to 0$ uniformly on compact subsets of $\D$ one has
$\| (\chi_E T_g)f_n \|_{L^p} \to 0$ as $n \to \infty$.
\end{prop}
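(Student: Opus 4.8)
The plan is to reduce the compactness of $\chi_E T_g$ to a single vanishing estimate on the boundary, and then to transfer that estimate, through an auxiliary composition operator, to the known compactness of Volterra operators with $\VMOA$ symbols. First I would record the routine reduction: by the pointwise bound \eqref{eval} the unit ball of $H^p$ is a normal family, so every bounded sequence in $H^p$ has a subsequence converging uniformly on the compact subsets of $\D$ to an element of $H^p$. Splitting a bounded sequence into its uniform-on-compacts limit plus a remainder, one sees that $\chi_E T_g$ is compact as soon as $\|\chi_E T_g u_n\|_{L^p(\T)}\to0$ for every bounded sequence $(u_n)\subset H^p$ with $u_n\to0$ uniformly on compact subsets of $\D$; this last assertion is moreover exactly the ``in particular'' clause. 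So it suffices, given $\eta>0$, to produce a compact set $E\subset\T$ with $m(\T\setminus E)<\eta$ for which it holds.

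The set $E$ is selected using the condensation property of the Carleson measure $d\mu_g(z)=|g'(z)|^2\log\frac1{|z|}\,dA(z)$ (Lemma~\ref{le:littleCarleson}). The idea behind that lemma is that, for $g\in\BMOA$, the local Carleson density of $\mu_g$ at a boundary point --- equivalently the local mean oscillation of $g$ there --- tends to $0$ at almost every point of $\T$ (a Lebesgue-point argument using $g\in L^q_{\mathrm{loc}}(\T)$ for all $q<\infty$), and Egorov's theorem then upgrades this to uniform decay outside an open set of measure $<\eta$; the complement of that open set is the desired $E$. Informally, the whole of the non-$\VMOA$ behaviour of $g$ is confined to the small exceptional set, and near $E$ the symbol $g$ behaves like a $\VMOA$ symbol.

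Now I would invoke the composition-operator device (Lemma~\ref{le:CphiTg}): an analytic self-map $\phi$ of $\D$ whose boundary function is the identity on $E$, with non-tangential contact along $E$, and which pinches $\D$ into a compact subset of $\D$ over each complementary arc of $\T\setminus E$, chosen so that $g\circ\phi\in\VMOA$. Granting such a $\phi$, the rest is short. Since $\phi$ approaches $\zeta$ non-tangentially for a.e.\ $\zeta\in E$, the non-tangential boundary values of $C_\phi F=F\circ\phi$ agree with those of $F$ on $E$, for every $F\in H^p$; hence
\[
 \|\chi_E T_g u_n\|_{L^p(\T)}=\Bigl(\int_E|T_g u_n|^p\,dm\Bigr)^{1/p}\le\|C_\phi T_g u_n\|_{H^p}.
\]
A change of variables $w\mapsto\phi(w)$ in the defining integral yields the intertwining identity $C_\phi T_g=T_{g\circ\phi}C_\phi+R$, where $R$ is the rank-one operator $f\mapsto (T_g f)(\phi(0))\cdot\mathbf 1$; since $u_n\to0$ uniformly on the segment $[0,\phi(0)]$ we get $\|Ru_n\|_{H^p}\to0$. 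Moreover $(C_\phi u_n)$ is bounded in $H^p$ and tends to $0$ uniformly on compact subsets of $\D$ (as $\phi$ carries compact subsets of $\D$ to compact subsets of $\D$), while $T_{g\circ\phi}$ is compact on $H^p$ by \cite{AleSis} because $g\circ\phi\in\VMOA$; and a compact operator takes a bounded sequence that is null for the compact-open topology to a norm-null sequence, since every subsequence has a further subsequence whose $H^p$-image converges, necessarily to $0$ because $H^p$-convergence forces compact-open convergence. Thus $\|C_\phi T_g u_n\|_{H^p}\to0$, the display gives $\|\chi_E T_g u_n\|_{L^p(\T)}\to0$, and the reduction in the first paragraph completes the proof.

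The step I expect to be the genuine obstacle is Lemma~\ref{le:CphiTg}: constructing $\phi$ so that it is simultaneously the identity on $E$ (with non-tangential contact), pinched away from $\T$ over $\T\setminus E$, and such that $g\circ\phi$ really lies in $\VMOA$. The last property is the delicate one, since $\VMOA$ membership is not a size condition on $g\circ\phi$: one has to control the Carleson measure $\mu_{g\circ\phi}$ uniformly at all small scales, and this is precisely where the quantitative uniform estimate coming from the condensation Lemma~\ref{le:littleCarleson} must be matched against the geometry of $\phi$ near the component arcs of $\T\setminus E$. The condensation lemma itself (a.e.\ decay of local densities together with Egorov) is comparatively soft, and the remaining ingredients --- the normal-families reduction, the change of variables, and the compactness transfer through $C_\phi$ --- are routine.
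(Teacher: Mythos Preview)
Your scheme collapses at the construction of $\phi$. You ask for an analytic self-map $\phi\colon\D\to\D$ whose non-tangential boundary function equals the identity on $E$, while $\phi$ is ``pinched'' away from $\T$ over the complementary arcs. But $\phi(z)-z$ is a bounded analytic function on $\D$; if its boundary values vanish on a set $E$ of positive measure, the classical uniqueness theorem for $H^\infty$ forces $\phi(z)\equiv z$. Hence no such $\phi$ exists other than the identity, and your key inequality $\|\chi_E T_g u_n\|_{L^p}\le\|C_\phi T_g u_n\|_{H^p}$, which rested precisely on $\phi^*(\zeta)=\zeta$ for $\zeta\in E$, has no content. Consequently the intertwining $C_\phi T_g = T_{g\circ\phi}C_\phi + R$ and the hoped-for reduction to $g\circ\phi\in\VMOA$ never get off the ground.

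The paper resolves this by taking $\phi$ to be a Riemann map of $\D$ onto the union $\Omega=\bigcup_{\zeta\in K}S(\zeta)$ of Stolz domains over the compact set $K$ produced by Lemma~\ref{le:littleCarleson} (so $\phi$ is far from the identity on any part of $\T$). Compactness of $C_\phi T_g$ is then obtained not through $g\circ\phi\in\VMOA$ but directly: for $p=2$ the Littlewood--Paley identity and Schwarz's lemma give $\|C_\phi T_g f\|_{H^2}^2\le 2\int_\D|f|^2\,\chi_\Omega\,d\mu_g$, and the condensation lemma makes $\chi_\Omega\,d\mu_g$ a vanishing Carleson measure; interpolation handles general $p$. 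Since $\phi^*$ is not the identity on $K$, the comparison between $\int_E|T_gf|^p\,dm$ and $\|C_\phi T_g f\|_{H^p}^p$ is no longer trivial: one pushes $m$ forward by $\phi^*$ to the harmonic measure $\nu$ of $\Omega$, uses the F.\ and M.\ Riesz theorem (rectifiable boundary) to see that the density $h=d\nu_{|\T}/dm$ is strictly positive a.e.\ on $K$, and then passes to a compact subset $E\subset\{h>\delta\}$ on which $\delta\int_E|T_gf|^p\,dm\le\|C_\phi T_g f\|_{H^p}^p$. Your normal-families reduction and the final ``in particular'' deduction are fine and match the paper.
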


Taking Proposition~\ref{prop:KTg} for granted momentarily, we next show how it yields
Theorem~\ref{thm:main}.

\begin{proof}[Proof of Theorem~\ref{thm:main}]
By a repeated application of Proposition~\ref{prop:KTg} we can
find compact sets $E_1 \subset E_2 \subset \cdots \subset \T$
such that $m(\T\setminus E_k) \to 0$ as $k \to \infty$ and
$\chi_{E_k}T_g\colon H^p \to L^p(\T)$ is a compact operator for each $k$.
Moreover, since $f_n \to 0$ uniformly  on the compact subsets of $\D$ by
assumption, Proposition~\ref{prop:KTg} ensures that
\begin{equation}\label{con2}
   \| (\chi_{E_k}T_g)f_n \|_{L^p}^p
   = \int_{E_k} |T_g f_n|^p\,dm \to 0
   \quad\text{as $n \to \infty$}
\end{equation}
for each $k$. On the other hand, for each $n$ we have by absolute continuity that
\begin{equation}\label{con3}
   \int_{\T\setminus E_k} |T_g f_n|^p \,dm \to 0
   \quad\text{as $k \to \infty$.}
\end{equation}

By assumption \eqref{con1} we may suppose that
$\int_{\T} |T_g f_n|^p \,dm > d^p$ for each $n$
and a suitable constant $d > 0$.
For any fixed $\delta > 0$, we may by successive applications of \eqref{con2} and \eqref{con3}
extract increasing indices $n_1 < n_2 < \cdots$ and $j_1 < j_2 < \cdots$ such that
the conditions
\begin{equation}\label{tg1}
  \Bigl(\int_{\T \setminus E_{j_{r}}} |T_g f_{n_{s}}|^p \,dm \Bigr)^{1/p} < 4^{-r}\delta d,
  \qquad s = 1, \ldots, r-1,
\end{equation}
\begin{equation}\label{tg2}
  \Bigl(\int_{E_{j_{r}}} |T_g f_{n_{r}}|^p \,dm \Bigr)^{1/p} < 4^{-r}\delta d
\end{equation}
\begin{equation}\label{tg3}
  \Bigl(\int_{\T \setminus E_{j_{r}}} |T_g f_{n_{r}}|^p \,dm \Bigr)^{1/p} > d
\end{equation}
hold for all $r \geq 1$. This is a straightforward gliding hump type argument: Suppose  that we have found  indices
$n_1 < \ldots < n_r$ and $j_1 < \ldots < j_r$ so that
\eqref{tg1} - \eqref{tg3} hold until $r$. Next we use
\eqref{con3} to get $j_{r+1} > j_r$
so that \eqref{tg1} holds for $f_{n_{1}}, \ldots , f_{n_{r}}$ and the set $\T \setminus E_{j_{r+1}}$,
and then \eqref{con2} to find $n_{r+1} > n_r$ so that \eqref{tg2} holds.
Moreover, we may ensure \eqref{tg3} at stage $r+1$ since
$\int_{\T} |T_g f_n|^p \,dm > d^p$ for each $n$.

Finally, by applying the perturbation argument
from  \cite[Sec.~3]{Miihkinen} (see also Theorem 1.2 in \cite{LNST}),
which we do not repeat here, it follows that \eqref{ellp} holds for $(T_g f_{n_j})$
once $\delta > 0$ is small enough.
\end{proof}

We next turn to the proof of the crucial Proposition~\ref{prop:KTg}.
To any analytic function
$g\colon \D \to \C$ we associate the positive Borel measure $\mu_g$ on
$\D$ defined by the density
\[
   d\mu_g(z) = |g'(z)|^2 \log \frac{1}{|z|} \,dA(z),
\]
where $A$ is the Lebesgue area measure normalised such that $A(\D) = 1$.
Then the Littlewood-Paley identity (see e.g.\ \cite[Thm~2.30]{CMC}) implies
that
\[
  \lVert g \rVert_{H^2}^2 = |g(0)|^2 + 2 \int_\D d\mu_g(z)
\]
for $g \in H^2$. We also recall that $g \in \BMOA$ if and only if $\mu_g$ is a
Carleson measure, i.e.\ there is a constant $c > 0$ such that
\[
   \mu_g(W(\zeta,h)) \leq ch
   \quad\text{for $\zeta \in \T,\ 0<h<1$,}
\]
where $W(\zeta,h)$ is the Carleson window
\[
   W(\zeta,h) = \{ z\in\D : 1-h<|z|<1,\: |\arg (z/\zeta) |<h \}.
\]
Furthermore, $g \in \VMOA$ if and only if $\mu_g$ is a vanishing
Carleson measure, i.e.
\[
   \sup_{\zeta\in\T} \mu_g(W(\zeta,h)) = o(h)
   \quad\text{as $h\to 0$.}
\]
For proofs of the above results see e.g \cite[Chap.~VI.3]{Garnett} or
\cite[Sec.~6]{Girela}.

Our first auxiliary result says that every function $g \in \BMOA$
(or even $g \in H^2$) has uniformly vanishing mean oscillation on $\T$ up to
a set of arbitrarily small measure.

\begin{lemma} \label{le:littleCarleson}
Let $g \in H^2$ be arbitrary. Then for every $\varepsilon > 0$, there exists a
compact set $K \subset \T$ such that $m(\T \setminus K) < \varepsilon$ and
\[
   \sup_{\zeta\in K} \mu_g(W(\zeta,h)) = o(h)
   \quad\text{as $h\to 0$.}
\]
\end{lemma}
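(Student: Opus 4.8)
The plan is to exploit the fact that $\mu_g$ is a finite measure on $\D$ (by the Littlewood--Paley identity, $\mu_g(\D) = \tfrac12(\|g\|_{H^2}^2 - |g(0)|^2) < \infty$) together with a maximal-function argument on $\T$. For $\zeta \in \T$ and $h \in (0,1)$ set
\[
   \Phi(\zeta,h) = \frac{\mu_g(W(\zeta,h))}{h},
\]
and define the ``tail maximal function''
\[
   M_\delta g(\zeta) = \sup_{0 < h \le \delta} \Phi(\zeta,h), \qquad \zeta \in \T.
\]
Each $M_\delta g$ is a measurable (in fact lower semicontinuous) function on $\T$. The key analytic input is that, since $\mu_g$ is finite, the Carleson-window averages $\Phi(\zeta,h)$ satisfy a weak-type bound: there is an absolute constant $A$ with
\[
   m\bigl(\{\zeta \in \T : M_\delta g(\zeta) > \lambda\}\bigr) \le \frac{A\,\mu_g(\D)}{\lambda}
\]
for every $\lambda > 0$ and every $\delta \in (0,1)$. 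This is the standard weak-$(1,1)$ estimate for the Carleson-box maximal operator, proved by a Vitali-type covering lemma applied to the windows $W(\zeta_j,h_j)$ realising $\Phi(\zeta_j,h_j) > \lambda$: finitely many such windows whose top arcs cover a compact subset of the level set can be thinned to a disjoint subfamily, and disjointness of the windows together with $\sum \mu_g(W(\zeta_j,h_j)) \le \mu_g(\D)$ gives the bound (the bounded-overlap / covering constant absorbs into $A$). The bound is uniform in $\delta$ because it only uses finiteness of $\mu_g$.

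Next I would combine this with a pointwise ``vanishing'' statement. For each individual $\zeta \in \T$, the quantity $\Phi(\zeta,h)$ need not tend to $0$ as $h \to 0$ for \emph{every} $\zeta$, but it does for $m$-a.e.\ $\zeta$: this is a Lebesgue-type differentiation fact, which follows from the finiteness of $\mu_g$ combined with the fact that $\mu_g$ assigns no mass to $\T$ (it lives on the open disc), via the weak-type bound above applied to $\limsup_{h\to 0}\Phi(\zeta,h)$. Concretely, $g_\delta(\zeta) := \sup_{0<h\le\delta}\Phi(\zeta,h)$ decreases as $\delta \downarrow 0$, and its limit $g_0(\zeta) = \limsup_{h\to 0}\Phi(\zeta,h)$ satisfies $m(\{g_0 > \lambda\}) \le A\mu_g(\D)/\lambda$ for all $\lambda$; but one checks $g_0$ is in fact $0$ a.e., because any window $W(\zeta,h)$ with small $h$ lies inside a thin annular neighbourhood of $\T$ whose $\mu_g$-mass goes to $0$ with $h$ (again using $\mu_g(\D)<\infty$ and dominated convergence on nested shrinking sets). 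Hence $M_\delta g \to 0$ $m$-a.e.\ on $\T$ as $\delta \to 0$.

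Given $m$-a.e.\ convergence of the measurable functions $M_\delta g$ to $0$ on the finite measure space $(\T,m)$, Egorov's theorem furnishes, for any $\varepsilon > 0$, a measurable set $K_0 \subset \T$ with $m(\T\setminus K_0) < \varepsilon/2$ on which $M_\delta g \to 0$ uniformly; that is, $\sup_{\zeta\in K_0} \sup_{0<h\le\delta}\Phi(\zeta,h) \to 0$ as $\delta\to 0$, which is exactly the assertion $\sup_{\zeta\in K_0}\mu_g(W(\zeta,h)) = o(h)$ as $h\to 0$. Finally, by inner regularity of $m$, shrink $K_0$ to a compact subset $K \subset K_0$ with $m(K_0\setminus K) < \varepsilon/2$; then $m(\T\setminus K) < \varepsilon$ and the uniform $o(h)$ estimate persists on the smaller set $K$. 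This completes the proof.

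The main obstacle is the second step: establishing that the tail maximal function $M_\delta g$ really tends to $0$ almost everywhere, rather than merely staying bounded in weak $L^1$. The clean way is to observe that for fixed $\zeta$ and $h \le \delta$ one has $W(\zeta,h) \subset A_\delta := \{z : 1-\delta < |z| < 1\}$, so $\Phi(\zeta,h) \le h^{-1}\mu_g(W(\zeta,h))$, and then run the weak-type argument for the \emph{restricted} measure $\mu_g\!\restriction_{A_\delta}$, whose total mass $\mu_g(A_\delta) \to 0$ as $\delta \to 0$ by dominated convergence. This gives $m(\{M_\delta g > \lambda\}) \le A\,\mu_g(A_\delta)/\lambda \to 0$ for each fixed $\lambda$, hence $M_\delta g \to 0$ in measure, and the monotonicity of $M_\delta g$ in $\delta$ upgrades convergence in measure to a.e.\ convergence along the whole family --- after which Egorov applies as above.
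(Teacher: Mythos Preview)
Your proposal is correct and follows essentially the same route as the paper: restrict $\mu_g$ to thin annuli $A_\delta$ (the paper uses $S_k=A_{1/k}$), apply the weak-$(1,1)$ maximal inequality so that $m(\{M_\delta g>\lambda\})\le C\mu_g(A_\delta)/\lambda\to 0$, use monotonicity in $\delta$ to upgrade convergence in measure to a.e.\ convergence, and then apply Egorov and inner regularity. The only cosmetic difference is that the paper radially projects $\mu_g|_{S_k}$ to a measure $\nu_k$ on $\T$ and invokes the classical Hardy--Littlewood maximal theorem, whereas you work directly with Carleson windows via a Vitali argument; since disjoint top arcs force disjoint windows, the two formulations are equivalent.
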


\begin{proof}
For each $k \geq 1$, let $\nu_k$ be the projection to the unit circle of the
measure $\mu_g$ restricted to the annulus $S_k = \{z: 1-\frac{1}{k} < |z| < 1\}$.
That is, $\nu_k$ is determined by the condition
\[
  \nu_k (I(\zeta,h)) = \mu_g (\{ z\in S_k : |\arg(z/\zeta)|<h \})
\]
for all boundary arcs $I(\zeta,h) = \{\xi\in\T : |\arg(\xi/\zeta)|<h\}$.
Consider the Hardy-Littlewood maximal function of $\nu_k$,
\[
   \nu_k^*(\zeta) = \sup_{0<h<\pi} \frac{1}{2h} \nu_k(I(\zeta,h)).
\]
By the maximal function theorem it satisfies
\begin{equation} \label{eq:maximal}
   m(\{ \zeta\in\T : \nu_k^*(\zeta) > \lambda\}) \leq C\frac{\nu_k(\T)}{\lambda}
\end{equation}
for all $\lambda > 0$ and a numerical constant $C > 0$. Note that here
$\nu_k(\T) = \mu_g(S_k) \to 0$ as $k\to\infty$ since $\mu_g$ is a finite
measure by the Littlewood-Paley identity.

We claim that $\nu_k^* \to 0$ almost everywhere on $\T$ as $k\to\infty$.
In fact, since
$(\nu_k^*)$ is obviously a pointwise decreasing sequence, we would otherwise
find a constant $\lambda > 0$ and a set $E \subset \T$ of positive measure
such that $\nu_k^*(\zeta) > \lambda$ for all $\zeta \in E$ and $k \geq 1$.
But this would contradict \eqref{eq:maximal}. Hence $\nu_k^* \to 0$ a.e.\ on $\T$.
Egorov's theorem now implies that there is a set $F \subset \T$ with
$m(\T\setminus F) < \varepsilon/2$ such that $\nu_k^* \to 0$
uniformly in $F$ as $k \to \infty$. Since, for every $k \geq 1$ and
$\zeta \in F$,
\[
   \sup_{0<h<1/k} \frac{\mu_g(W(\zeta,h))}{h}
   \leq \sup_{0<h<\pi} \frac{\nu_k(I(\zeta,h))}{h} = 2\nu_k^*(\zeta),
\]
we deduce that $\sup_{\zeta\in F} \mu_g(W(\zeta,h)) = o(h)$ as
$h \to 0$. Finally, it just remains to pick a compact subset
$K\subset F$ with $m(F\setminus K) < \varepsilon/2$.
\end{proof}
To exploit Lemma~\ref{le:littleCarleson} in the analysis of the
Volterra operator $T_g$ we will employ the product operator $C_\phi T_g$
for a composition operator $C_\phi$ whose symbol $\phi$ will be associated to the
compact set $K$ from Lemma~\ref{le:littleCarleson}. Here
\[
  (C_\phi T_g)f(z) =  \int_0^{\phi(z)} f(w)g'(w)\,dw,
  \qquad z \in \D,
\]
for any analytic function $f\colon \D \to \C$. Recall that both $C_\phi$ and $T_g$, and hence the product $C_\phi T_g$, are bounded on $H^p$ for all $p\in (0,\infty).$ For each $\zeta \in \T$
define the Stolz domain $S(\zeta)$ in $\D$ with vertex at $\zeta$ as the
interior of the convex hull of the set
$\{z: |z| < \tfrac{1}{2}\} \cup\{\zeta\}$.

\begin{lemma} \label{le:CphiTg}
Let $g \in \BMOA$ and $\varepsilon > 0$. Choose a compact subset $K \subset \T$ as in
Lemma~\ref{le:littleCarleson} and put $\Omega = \bigcup_{\zeta\in K} S(\zeta)$.
Let $\phi$ be a Riemann map from $\D$ onto $\Omega$ with $\phi(0)=0$.
Then $C_\phi T_g\colon H^p \to H^p$ is a compact operator for any $1 \le p < \infty$.
\end{lemma}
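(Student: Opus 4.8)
The plan is to show that $C_\phi T_g$ is compact by proving it maps the unit ball of $H^p$ into a set on which the $H^p$-norms are "uniformly small at the boundary," which, combined with the fact that $H^p$-bounded sequences converging uniformly on compacta are the obstruction to compactness, will suffice. Concretely, I would first reduce to the following statement: every bounded sequence $(f_n) \subset H^p$ with $f_n \to 0$ uniformly on compact subsets of $\D$ satisfies $\|C_\phi T_g f_n\|_{H^p} \to 0$; since $T_g$ is bounded and $C_\phi T_g$ is linear, and since $H^p$-bounded sequences have weakly convergent subsequences with the weak limit forcing uniform-on-compacta convergence after subtracting it, this is equivalent to compactness of $C_\phi T_g$.

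The geometric heart of the matter is that $\phi$ maps $\D$ into $\Omega = \bigcup_{\zeta \in K} S(\zeta)$, and every point of $\Omega$ lies in a Stolz domain with vertex on $K$. The key analytic input is that, by Lemma~\ref{le:littleCarleson}, $\mu_g$ restricted to $\Omega$ is a \emph{vanishing} Carleson measure: indeed $\Omega$ is a union of Stolz domains with vertices in $K$, so any Carleson window $W(\eta,h)$ meets $\Omega$ only near vertices $\zeta \in K$ within comparable distance, and the estimate $\sup_{\zeta \in K}\mu_g(W(\zeta,h)) = o(h)$ upgrades to $\mu_g(W(\eta,h) \cap \Omega) = o(h)$ uniformly in $\eta \in \T$. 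Writing $(C_\phi T_g)f(z) = \int_0^{\phi(z)} f(w) g'(w)\,dw = (T_{g \circ \phi}(f \circ \phi))(z) \cdot$ (up to the chain-rule identity $(g\circ\phi)'(z) = g'(\phi(z))\phi'(z)$, so that $C_\phi T_g = T_{g\circ\phi} C_\phi$), I would then observe that $g \circ \phi \in \VMOA$: its associated measure $\mu_{g\circ\phi}$ on $\D$ is, by the standard change-of-variables estimate for subordinated/composed symbols, dominated by the pull-back of $\mu_g|_\Omega$ under $\phi$, and since $\phi(\D) \subset \Omega$ and the Stolz-domain geometry makes $\phi$ "angular" at the boundary, the vanishing Carleson condition for $\mu_g|_\Omega$ transfers to a vanishing Carleson condition for $\mu_{g\circ\phi}$. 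Hence $g\circ\phi \in \VMOA$, and then $T_{g\circ\phi}$ is compact on $H^p$ by the Aleman--Cima/Aleman--Siskakis theorem quoted in the introduction; since $C_\phi$ is bounded on $H^p$, the composition $C_\phi T_g = T_{g\circ\phi}C_\phi$ is compact.

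The main obstacle I expect is the transfer of the vanishing Carleson condition from $\mu_g$ restricted to $\Omega$ to the measure $\mu_{g\circ\phi}$ on all of $\D$: this requires controlling $|(g\circ\phi)'(z)|^2\log\frac1{|z|} = |g'(\phi(z))|^2|\phi'(z)|^2\log\frac1{|z|}$, and relating $\log\frac1{|z|}$ to $\log\frac1{|\phi(z)|}$ while accounting for the distortion $|\phi'(z)|$. The Stolz-domain choice of $\Omega$ is precisely what makes this tractable — $\phi$ extends to a nice (in fact Lipschitz-type, or at least John-domain) boundary behaviour, the image curves stay within a fixed aperture cone at each boundary point of $K$, so $1-|z| \approx 1 - |\phi(z)|$ up to the relevant local scale and the Carleson-box pull-back $\phi^{-1}(W(\zeta,h)\cap\Omega)$ is contained in a Carleson box of comparable size. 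One should verify this via the classical fact that a Carleson measure supported in a nontangential approach region pulls back under a Riemann map onto (a union of) such regions to a Carleson measure, with the "vanishing" quantifier preserved because the $o(h)$ estimate is uniform over the vertices in $K$; the non-tangential nature of $\Omega$ prevents the distortion from destroying the decay rate. A secondary technical point is checking that the boundary of $\Omega$ is a Jordan curve (or at least that $\phi$ has the requisite regularity for $C_\phi$ to be bounded and for the change of variables to hold), but this follows from $K$ being compact and the explicit convex-hull definition of the Stolz domains $S(\zeta)$, which glue into a domain with rectifiable, chord-arc boundary.
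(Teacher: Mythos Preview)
Your factorisation $C_\phi T_g = T_{g\circ\phi}\,C_\phi$ is correct, and your observation that $\chi_\Omega\,d\mu_g$ is a vanishing Carleson measure (any window $W(\eta,h)$ meeting $\Omega$ sits inside $W(\xi,3h)$ for some $\xi\in K$) is exactly the geometric fact the paper uses. The gap lies in the next step, where you claim that this transfers to $\mu_{g\circ\phi}$ being vanishing Carleson, i.e.\ $g\circ\phi\in\VMOA$. Under the change of variables one has
\[
   \mu_{g\circ\phi}(W(\zeta,h))
   = \int_{\phi(W(\zeta,h))} |g'(w)|^2 \log\frac{1}{|\phi^{-1}(w)|}\,dA(w)
   \leq \mu_g\bigl(\phi(W(\zeta,h))\bigr),
\]
so you would need $\phi(W(\zeta,h))$ to be contained in a Carleson window of height $O(h)$ centred at some $\xi\in K$, with the implied constant \emph{uniform} in $\zeta$. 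This amounts to a uniform bound on the distortion $|\phi'|$ near the part of $\T$ that $\phi$ sends to $K$. For a generic compact $K\subset\T$ (think of Cantor-type sets), the boundary $\partial\Omega$ is merely Lipschitz, so one only knows $\log\phi'\in\BMOA$ (equivalently $|\phi'|$ is an $A_\infty$ weight), not that $\phi'$ is bounded. The ``classical fact'' you invoke about pulling back vanishing Carleson measures through Riemann maps onto nontangential regions is not standard and would itself need exactly this kind of distortion control. Your heuristic $1-|z|\approx 1-|\phi(z)|$ also fails in general at such boundary points.

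The paper sidesteps all of this by never leaving the target disc. Using the Littlewood--Paley identity and the change of variables $w=\phi(z)$ it writes
\[
   \|(C_\phi T_g)f\|_{H^2}^2
   = 2\int_\Omega |f(w)|^2\,|g'(w)|^2 \log\frac{1}{|\phi^{-1}(w)|}\,dA(w),
\]
and then the single inequality $|\phi^{-1}(w)|\geq |w|$ from Schwarz's lemma bounds this by $2\int_\Omega |f(w)|^2\,d\mu_g(w)$. Now the relevant embedding is $H^2(\D)\hookrightarrow L^2(\chi_\Omega\,d\mu_g)$, with $f$ ranging over genuine $H^2(\D)$ functions, and compactness follows directly from $\chi_\Omega\,d\mu_g$ being vanishing Carleson---no control on $\phi'$ is ever needed. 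The passage to general $p$ is then done by real interpolation $H^p=(H^{p_0},H^2)_{\theta,p}$ together with Krasnoselskii-type one-sided interpolation of compactness, rather than by the sequential criterion you propose (which, incidentally, requires care for $p=1$ since $H^1$ is not reflexive; the normal-families argument still works but ``weakly convergent subsequences'' is not quite the right phrase there).
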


\begin{proof}
We start by considering the case $p = 2$.
By the Littlewood-Paley identity, the fact that $\phi(0) = 0$, and the change of
variables $w = \phi(z)$, we get that
\begin{equation}\label{LP}
   \|(C_\phi T_g) f\|_{H^2}^2 =
   2 \int_\Omega |f(w)|^2|g'(w)|^2 \log\frac{1}{|\phi^{-1}(w)|} \,dA(w).
\end{equation}
Since $|\phi^{-1}(w)| \geq |w|$ for all $w \in \Omega$ by Schwarz's lemma, it will be enough to
show that $\chi_\Omega\,d\mu_g$ is a vanishing Carleson measure.
Assuming this for a moment, known results (cf.\ the proof of \cite[Thm~2.33]{CMC}) yield
that the natural embedding
$H^2 \to L^2(\D, \chi_\Omega\,d\mu_g)$ is a compact operator, whose norm pointwise dominates that of
$C_\phi T_g\colon H^2 \to H^2$ by \eqref{LP}. This easily implies that also $C_\phi T_g\colon H^2 \to H^2$ is compact.

In order to verify the vanishing nature of the Carleson measure  $\chi_\Omega\,d\mu_g$, let   $\zeta \in \T$ and $0 < h < \tfrac{1}{4}$, and consider a Carleson
window $W(\zeta,h)$ which has a nonempty intersection with $\Omega$. Note that  $\partial \Omega\cap \T =K$, and let
$\xi$ be a point of $K$ that is closest to $\zeta$ (if $\zeta \in K$,
then take $\xi = \zeta$). It follows by geometric inspection that
the angular distance from $\zeta$ to $\xi$ is less than $2h$.
Therefore $W(\zeta,h) \subset W(\xi,3h)$ and so
\[
   \frac{1}{h} (\chi_\Omega\mu_g)(W(\zeta,h))
   \leq 3 \cdot \frac{1}{3h} \mu_g(W(\xi,3h)).
\]
In view of Lemma~\ref{le:littleCarleson} this implies that $\chi_\Omega\,d\mu_g$ is
a vanishing Carleson measure.

The claim for the other values of $p$ is obtained from the case $p=2$ above and the
identification $H^p = (H^{p_0}, H^{p_1})_{\theta, p}$ in terms of real interpolation spaces,
where $\frac{1}{p} = \frac{1-\theta}{p_0} + \frac{\theta}{p_1}$ with $0 < p_0 < \infty$ and
$p_1 = 2$ (see e.g.\ \cite[p.~1]{KX}), and one-sided
Krasnoselskii-type interpolation of compactness for operators. For the case $p > 1$ the
classical form \cite{K} of this result suffices, and for $p = 1$ we refer e.g.\ to 
\cite[Thm~3.1]{CP}
for a version of Krasnoselskii's theorem that also applies to quasi-Banach spaces
such as $H^{p_0}$ for $0 < p_0 < 1$.
\end{proof}

The auxiliary steps in Lemmas \ref{le:littleCarleson} and \ref{le:CphiTg} finally allow us
to establish  Proposition~\ref{prop:KTg}.

\begin{proof}[Proof of Proposition~\ref{prop:KTg}]
Fix $0 < \eta < 1/2$. Let $K$, $\Omega$ and $\phi$ be as given by Lemmas
\ref{le:littleCarleson} and \ref{le:CphiTg} corresponding to $\varepsilon = \eta/3$.
Define a probability measure $\nu$ on the closed disc $\overline{\D}$ as the
push-forward of $m$ under the boundary values of $\phi$, which we
denote by $\phi^*(\zeta)$; that is,
\[
   \nu(B) = m(\{\zeta\in\T: \phi^*(\zeta) \in B\})
\]
for all Borel sets $B \subset \overline{\D}$. Then $\nu$ is the
harmonic measure of the domain $\Omega$ with pole at $\phi(0) = 0$. Let $h$ be the
density (i.e.\ the Radon-Nikodym derivative) of $\nu_{|\T}$ (or, equivalently, of $\nu_{|K}$) with respect to $m$.
It is not difficult to see that $0 \leq h \leq 1$ a.e.

Note that  $\partial\Omega$ is rectifiable, since $\Omega\subset\D$ is a Lipschitz domain
by inspection. Therefore, according to a classical result of F.\ and M.\ Riesz
(see Theorem VI.1.2 and (1.4) on p.~202 of \cite{GarMar}), $\nu$ and the Hausdorff
1-measure $\mathcal{H}^1$ on $\partial\Omega$ are mutually absolutely continuous.
In particular, if $F\subset K$ satisfies  $\nu(F)=0$, then
$m(F) = \mathcal{H}^1(F) = 0$. This implies that $h>0$ a.e.\ on $K$. Thus, we may find
$\delta>0$ small enough so that the set $F = \{\zeta \in K: h(\zeta)>\delta\}$ satisfies
$m(K \setminus F) < \varepsilon$. Then we proceed to choose the compact set
$E \subset F$ such that $m(F\setminus E) < \varepsilon$, whence
$m(\T \setminus E) < 3\varepsilon = \eta$.

With the above preparations we may estimate, for any $f \in H^p$,
\begin{equation}\label{product}
\begin{split}
   \|(C_\phi T_g) f\|_{H^p}^p &= \int_{\T} |(T_g f) \circ \phi^*|^p\,dm
   = \int_{\overline{\D}} |T_g f|^p\,d\nu  \\
   &\geq \int_{\T} |T_g f|^p h\,dm \geq \delta  \int_E |T_g f|^p \,dm.
\end{split}
\end{equation}
Since $C_\phi T_g$ is compact $H^p \to H^p$
by Lemma~\ref{le:CphiTg}, as in the proof of that Lemma, we deduce
that $\chi_E T_g\colon H^p \to L^p(\T)$ is a compact operator.

Towards the final claim of the proposition suppose that the sequence $(f_n) \subset H^p$
is bounded and converges uniformly to $0$ on the compact subsets of $\D$.
Then also $(C_\phi T_g)f_n \to 0$ uniformly on the compact subsets of $\D$.
Therefore, by the uniqueness of the limit, every norm-convergent subsequence of
$((C_\phi T_g)f_n)$ must tend to zero. Since $C_\phi T_g$ is compact, we
actually deduce that $\|(C_\phi T_g)f_n\|_{H^p} \to 0$. Hence \eqref{product}
yields that $\|(\chi_E T_g)f_n\|_{L^p} \to 0$ as $n \to \infty$, and 
this finishes the proof of Proposition~\ref{prop:KTg}.
\end{proof}

Note that altogether the above steps complete the proof of Theorem~\ref{thm:main}.

\begin{remark}\label{comm}
(1) Theorem~\ref{thm:main0} and Corollary~\ref{cor1} state that the
linear qualitative behaviour  of non-compact Volterra operators 
$T_g\colon H^p \to H^p$ for $p \neq 2$ is very restricted  compared to that of 
arbitrary bounded operators on $H^p$. Recall that a result of 
Weis~\cite{Weis} for $L^p(0,1)$ combined with the known 
isomorphism $H^p \approx L^p(\T, m)$ for $1 < p < \infty$, see \cite{B}, imply that
if the restriction $U_{|M}$ of a given operator $U$ on $H^p$ is bounded below on some infinite-dimensional subspace 
$M \subset H^p$, then $M$ contains an isomorphic copy of 
either $\ell^p$ or $\ell^2$. 
We also remind that if $p > 2$ then any infinite-dimensional subspace $M \subset H^p$ 
contains isomorphic copies of either $\ell^p$ or $\ell^2$, but the case
$1 < p < 2$ is much more complicated,
see e.g.\ \cite[Chap.~6.4]{AK} for results of this type.
The case of the Hilbert space $H^2$ is well known and different, since
$\mathcal{S}_2(H^2)$ (i.e. the compact operators) is the unique closed ideal
in the algebra of bounded operators,
cf.\ \cite[5.1--5.2]{Pietsch}.

\smallskip

(2) The crucial estimate \eqref{product} on a compact set $E$ having large measure,
which is used in the proof of Proposition~\ref{prop:KTg},
can also be obtained in a direct fashion without recourse to the result by
F.\ and. M.\ Riesz. In fact, let $\varepsilon > 0$ be given (the exact value will
be specified later) and write $F = \{ \zeta\in K: h(\zeta)> 1/2\}$. Since
$h \leq 1$ a.e., one has
\[
   \nu(K) = \int_K h\,dm
   \leq m(\T\setminus F) \cdot \tfrac{1}{2} + m(F) 
   = \tfrac{1}{2}(1+ m(F)),
\]
and hence
\begin{equation}\label{mEstimate}
   m(F) \geq 2 \nu(K) - 1 = 1 - 2\nu(\partial\Omega\setminus K).
\end{equation}
Write $\T \setminus K = \bigcup_j I_j$, where the open arcs $I_j \subset \T$
are the (disjoint) connected components of $\T \setminus K$. For each $j$,
let $\zeta_j$ be the midpoint of $I_j$ and consider the Carleson
window $W_j = W(\zeta_j, 2\pi m(I_j))$ (if $m(I_j) \geq 1/2\pi$, take
$W_j = \D$). It follows from the geometry of $\Omega$ that
$\partial\Omega \subset K \cup \bigl(\bigcup_j \overline{W_j}\bigr)$. Since
$\nu$ is a Carleson measure, we have
$\nu(\overline{W_j}) \leq c m(I_j)$ where $c > 0$ is an absolute
constant (recall that $\phi(0) = 0$).
Consequently,
\[
   \nu(\partial\Omega\setminus K) \leq c \sum_j m(I_j) = cm(\T\setminus K)
   < c\varepsilon.
\]
In conjunction with \eqref{mEstimate}, this gives
$m(F) \geq 1 - 2c\varepsilon$. On choosing $\varepsilon = \eta/4c$ and a
compact set $E \subset F$ with $m(F\setminus E) < \eta/2$, we get
\eqref{product} with $\delta = 1/2$ and $m(\T\setminus E) < \eta/2+\eta/2 = \eta$.

\smallskip

(3) Theorem~\ref{thm:main0} also holds, with a similar proof to the case $p = 1$,
in the quasi-normed range $0 < p < 1$. However, here much less is known about the 
linear qualitative classification of operators,
so we will not include the technical details of this extension.
\end{remark}

In the proof of Lemma \ref{le:CphiTg} we utilized the compactness
of certain Volterra composition operators $C_\phi T_g$ on $H^p$
for arbitrary $g \in \BMOA \setminus \VMOA$
and an associated conformal map $\phi$. General operators of this kind were first considered
by Li and Stevi\'{c} \cite{LiSte}, and they have subsequently been studied in several papers.
As a simple by-product of part of the proof of Lemma \ref{le:CphiTg} we also record a  characterisation of the compactness of arbitrary
products $C_\phi T_g\colon H^p \to H^p$, which to the best of
our knowledge has not been made explicit in the literature.

We first point out a useful formula for the norm $\| (C_\phi T_g)f\|_{H^2}$.
Let $N_\phi$ be the Nevanlinna counting function of $\phi$, that is,
$N_\phi(w) = \sum_{z \in \phi^{-1}(w)} \log(1/ \vert z\vert)$ for
$w \in \phi(\D)$ and $w \neq \phi(0)$, and  $N_\phi(w) = 0$ for $w \notin \phi(\D)$.

\begin{lemma} \label{le:stanton}
Suppose that $\phi\colon \D\to\D$ is an analytic map, $g \in \BMOA$ and $f\in H^2$. Then
\[
   \|(C_\phi T_g)f\|_{H^2}^2 =
   |T_gf(\phi(0))|^2 + 2\int_{\D} |f(w)|^2|g'(w)|^2 N_\phi(w)\,dA(w).
\]
\end{lemma}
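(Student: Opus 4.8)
The plan is to prove Lemma~\ref{le:stanton} by a change-of-variables computation based on the Stanton formula, which expresses the $H^2$-norm of an analytic function via an area integral against the Nevanlinna counting function. First I would set $F = (C_\phi T_g)f = (T_g f)\circ\phi$, noting that $F$ is analytic on $\D$ since $T_g f$ is entire-analytic on $\D$ and $\phi$ maps $\D$ into $\D$; moreover $F(0) = T_g f(\phi(0))$. The derivative is $F'(z) = (T_g f)'(\phi(z))\,\phi'(z) = f(\phi(z))\,g'(\phi(z))\,\phi'(z)$ by the fundamental theorem of calculus defining $T_g$. Then I would invoke the Littlewood--Paley identity in the form
\[
   \|F\|_{H^2}^2 = |F(0)|^2 + 2\int_\D |F'(z)|^2 \log\frac{1}{|z|}\,dA(z),
\]
which is legitimate provided $F \in H^2$; this holds because $C_\phi$ and $T_g$ are bounded on $H^2$ (as recalled in the text before Lemma~\ref{le:CphiTg}), so $F \in H^2$ whenever $f \in H^2$.

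The heart of the matter is the non-injective change of variables $w = \phi(z)$ in the area integral $\int_\D |f(\phi(z))|^2 |g'(\phi(z))|^2 |\phi'(z)|^2 \log\frac{1}{|z|}\,dA(z)$. Writing the integrand as $G(\phi(z))|\phi'(z)|^2 \log\frac{1}{|z|}$ with $G(w) = |f(w)|^2|g'(w)|^2$, the standard argument is to first assume $\phi$ is nonconstant (the constant case being trivial, with $N_\phi \equiv 0$ and $F$ constant), so that the critical set of $\phi$ has measure zero; then $|\phi'(z)|^2\,dA(z)$ is the pullback of area measure, and summing over the (at most countably many) preimages $z \in \phi^{-1}(w)$ for a.e.\ $w$ gives
\[
   \int_\D G(\phi(z))|\phi'(z)|^2 \log\frac{1}{|z|}\,dA(z)
   = \int_{\phi(\D)} G(w) \Bigl(\sum_{z\in\phi^{-1}(w)} \log\frac{1}{|z|}\Bigr)\,dA(w)
   = \int_\D G(w)\, N_\phi(w)\,dA(w),
\]
the last equality using $N_\phi(w) = 0$ off $\phi(\D)$ and that the single value $w = \phi(0)$ contributes nothing to the integral. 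This is precisely Stanton's formula; I would cite it (e.g.\ from \cite{CMC}, consistently with the reference already used for the Littlewood--Paley identity and the Nevanlinna counting function machinery in composition-operator theory) rather than reprove the measure-theoretic change of variables.

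Combining the three displays and substituting $G(w) = |f(w)|^2|g'(w)|^2$ yields
\[
   \|(C_\phi T_g)f\|_{H^2}^2 = |T_g f(\phi(0))|^2 + 2\int_\D |f(w)|^2|g'(w)|^2 N_\phi(w)\,dA(w),
\]
which is the asserted identity. The main obstacle, such as it is, lies in justifying the non-injective change of variables cleanly: one must handle the critical points of $\phi$, argue that $\phi^{-1}(w)$ is countable and that the counting-function sum converges for a.e.\ $w$, and apply Tonelli's theorem to interchange sum and integral for the nonnegative integrand. All of this is subsumed in the classical Stanton identity, so in practice the proof reduces to carefully matching $F' = (f\circ\phi)(g'\circ\phi)\phi'$ against that identity; no genuinely new difficulty arises beyond bookkeeping, and in particular the hypothesis $g\in\BMOA$ is needed only to guarantee $T_g$, and hence $C_\phi T_g$, is bounded on $H^2$ so that the Littlewood--Paley identity applies.
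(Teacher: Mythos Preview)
Your proof is correct and follows essentially the same route as the paper: apply the Littlewood--Paley identity to $F = (T_g f)\circ\phi$, perform the non-injective change of variables $w = \phi(z)$ (Stanton's formula, cited from \cite{CMC}), and use $(T_g f)' = f g'$. The only cosmetic difference is that the paper invokes Shapiro's packaged norm formula \cite[Thm~2.31]{CMC} directly rather than separating the Littlewood--Paley and change-of-variables steps.
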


\begin{proof}
By applying a norm formula for composition operators due to J.\ Shapiro, see e.g.\
\cite[Thm~2.31]{CMC}, which combines the Littlewood-Paley identity with a change
of variables (a special case of Stanton's formula), it follows that
\[ \begin{split}
  \|(C_\phi T_g)f\|_{H^2}^2 &=  \vert T_gf(\phi(0))\vert^2 +
                2 \int_{\D} |(T_gf)'(w)|^2 N_\phi(w) \,dA(w) \\
     &= \vert T_gf(\phi(0)) \vert^2 + 2 \int_{\D} |f(w)|^2|g'(w)|^2 N_\phi(w)\,dA(w).
\end{split} \]
\end{proof}

\begin{corollary}\label{co:stanton}
Suppose that $\phi\colon \D \to \D$ is an  analytic map, $g \in \BMOA$ and
$0 < p < \infty$. Then $C_\phi T_g$ is compact $H^p \to H^p$
if and only if $|g'|^2 N_\phi\,dA$ is a vanishing Carleson measure.
\end{corollary}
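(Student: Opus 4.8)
The plan is to prove Corollary~\ref{co:stanton} by reducing everything to the case $p=2$ via interpolation, exactly as in the proof of Lemma~\ref{le:CphiTg}, and to handle the $H^2$ case directly through the norm formula in Lemma~\ref{le:stanton}. First I would observe that by Lemma~\ref{le:stanton}, for $f \in H^2$ the quantity $\|(C_\phi T_g)f\|_{H^2}^2$ equals $|T_gf(\phi(0))|^2$ plus twice the integral of $|f|^2$ against the measure $\lambda := |g'|^2 N_\phi\,dA$. The point-evaluation term is a rank-one perturbation and is irrelevant for compactness, so $C_\phi T_g$ is compact on $H^2$ if and only if the embedding $H^2 \to L^2(\D,\lambda)$ is compact. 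By the classical Carleson-embedding theory (the same results invoked in Lemma~\ref{le:CphiTg}, cf.\ the proof of \cite[Thm~2.33]{CMC}), this embedding is compact precisely when $\lambda$ is a vanishing Carleson measure. This settles the equivalence for $p=2$.

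For the ``if'' direction with general $p$, assume $\lambda$ is a vanishing Carleson measure. Then $C_\phi T_g$ is compact on $H^2$ by the previous paragraph, and it is bounded on $H^{p_0}$ for every $0<p_0<\infty$ (as recalled before Lemma~\ref{le:CphiTg}). The real-interpolation identification $H^p = (H^{p_0}, H^2)_{\theta,p}$ together with one-sided Krasnoselskii-type interpolation of compactness — in the form of \cite{K} for $p>1$ and \cite[Thm~3.1]{CP} for $p=1$, and likewise for the quasi-Banach range $0<p<1$ — yields that $C_\phi T_g$ is compact on $H^p$. This is verbatim the argument already used at the end of Lemma~\ref{le:CphiTg}, so only a pointer is needed.

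For the ``only if'' direction with general $p$, I would argue that compactness of $C_\phi T_g$ on $H^p$ forces $\lambda$ to be vanishing Carleson. The cleanest route is to test against the normalised reproducing-kernel-type functions adapted to Carleson windows: for $\zeta \in \T$ and $0<h<1$ set $f_{\zeta,h}(z) = (1-h)^{1/p}(1-\overline{(1-h)\zeta}\,z)^{-2/p}$, a standard family with $\|f_{\zeta,h}\|_{H^p} \asymp 1$ and $|f_{\zeta,h}(w)|^p \gtrsim h^{-1}$ for $w$ in the window $W(\zeta,h)$, and $f_{\zeta,h} \to 0$ uniformly on compact subsets of $\D$ as $h \to 0$. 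Compactness of $C_\phi T_g$ on $H^p$ then gives $\|(C_\phi T_g) f_{\zeta,h}\|_{H^p} \to 0$ uniformly in $\zeta$ as $h\to 0$ (since $f_{\zeta,h} \to 0$ weakly, uniformly in $\zeta$, and a compact operator sends such families to norm-null families). On the other hand, using the same Littlewood-Paley/Stanton change of variables as in Lemma~\ref{le:stanton} but for $H^p$ — or, more elementarily, passing through the pointwise domination of the $H^2$-norm as done in the proof of Lemma~\ref{le:CphiTg} after noting $|g'|^2 N_\phi\,dA$ controls the relevant integrand — one bounds $\frac1h \lambda(W(\zeta,h))$ from above by a constant times $\|(C_\phi T_g)f_{\zeta,h}\|^{?}$, forcing $\sup_\zeta \lambda(W(\zeta,h)) = o(h)$.

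The main obstacle is making the ``only if'' direction clean for $p \neq 2$: for $p=2$ one has the exact identity of Lemma~\ref{le:stanton}, but for general $p$ there is no such formula, so one must either work with the $H^2$-domination (which only compares norms, not compactness in the right direction) or invoke an $H^p$-version of the change-of-variables lower bound. The safest fix is to note that $C_\phi T_g$ compact on $H^p$ implies, by the same interpolation machinery run the other way (compactness on $H^p$ plus boundedness on a companion space gives compactness on $H^2$, using that $H^2$ itself lies on an interpolation segment with endpoint $H^p$), that $C_\phi T_g$ is compact on $H^2$; then the already-established $p=2$ equivalence finishes the job. I would present the corollary's proof in this compressed form: ``$p=2$'' via Lemma~\ref{le:stanton} and Carleson embeddings, and ``general $p \Leftrightarrow p=2$'' via two-sided interpolation of compactness, citing \cite{K} and \cite[Thm~3.1]{CP}.
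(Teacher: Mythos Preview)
Your proposal is correct and, once you settle on the ``safest fix'' in your final paragraph, it is exactly the paper's argument: the $p=2$ case via Lemma~\ref{le:stanton} and the Carleson embedding characterisation, and both directions for general $p$ by one-sided Krasnoselskii interpolation (placing $H^2$ on an interpolation segment with endpoint $H^p$). The test-function detour you explore for the ``only if'' direction is unnecessary and, as you yourself note, awkward for $p\neq 2$; the paper bypasses it entirely by interpolating back to $p=2$ just as you conclude.
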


\begin{proof}
Suppose first that $p = 2$ and let $(f_n) \subset H^2$ be any weak-null sequence, that is,
$(f_n)$ is bounded and $f_n \to 0$ uniformly on the compact subsets of $\D$. It follows
that
\[
  T_gf_n(\phi(0)) = \int_0^{\phi(0)} f_n(w)g'(w) \,dw \to 0
  \quad\text{as $n \to \infty$.}
\]
Consequently, if  $| g'|^2 N_\phi \,dA$ is a vanishing Carleson measure, then
the embedding $H^2 \to L^2(\D, | g'|^2 N_\phi\,dA)$ is compact, and
Lemma~\ref{le:stanton} implies that $\| (C_\phi T_g)f_n)\|_{H^2} \to 0$ as
$n\to \infty$.
This entails that $C_\phi T_g\colon H^2 \to H^2$ is compact.
Moreover, since $C_\phi T_g$ is bounded $H^p \to H^p$ for any $0 < p < \infty$,
the compactness of $C_\phi T_g\colon H^p \to H^p$ can be deduced  from one-sided
Krasnoselskii interpolation as in the proof of Lemma~\ref{le:CphiTg}.

Conversely, if $C_\phi T_g\colon H^p \to H^p$ is compact for some $p\in (0,\infty)$, then interpolation yields compactness for $p=2$, and again 
Lemma~\ref{le:stanton} yields that the embedding
$H^2 \to L^2(\D, |g'|^2 N_\phi\,dA)$ is compact.
This means that $|g'|^2 N_\phi\,dA$ is a vanishing Carleson measure, see
e.g.\ the proof of \cite[Thm~2.33]{CMC}.
\end{proof}



\begin{thebibliography}{99}

\bibitem{AK}
F.~Albiac and N.J.~Kalton, \emph{Topics in Banach Space Theory}, Springer, 2006.

\bibitem{Aleman06}
A.~Aleman, \emph{A class of integral operators on spaces of analytic functions},
Topics in Complex Analysis and Operator Theory, Proc.\ Winter School (Antequera, 2006),
pp.\ 3--30.

\bibitem{AleCi}
A.~Aleman and J.A.~Cima, \emph{An integral operator on $H^p$ and Hardy's inequality},
J.\ Anal.\ Math.\ 85 (2001), 157--176.

\bibitem{AleSis}
A.~Aleman and A.G.~Siskakis, \emph{An integral operator on $H^p$},
Complex Variables 28 (1995), 149--158.

\bibitem{B}
R.P.~Boas, \emph{Isomorphism between $H^p$ and $L^p$}, Amer.\ J.\ Math.\  77 (1955),
655--656.

\bibitem{CP}
F.~Cobos and L.-E.~Persson, \emph{Real interpolation of compact operators between
quasi-Banach spaces}, Math.\ Scand.\ 82 (1999), 138--160.

\bibitem{CMC}
C.C.~Cowen, and B.D.~MacCluer, \emph{Composition Operators on
Spaces of Analytic Functions}, CRC Press, 1995.

\bibitem{Duren}
P.L.~Duren, \emph{Theory of $H^p$ Spaces}, Academic Press, 1970;
reprinted by Dover, 2000.

\bibitem{Garnett}
J.B.~Garnett, \emph{Bounded Analytic Functions}, rev.\ ed., Springer, 2007.

\bibitem{GarMar}
J.B.~Garnett, and D.E.~Marshall, \emph{Harmonic Measure},
Cambridge Univ.\ Press, 2005.

\bibitem{Girela}
D.~Girela, \emph{Analytic functions of bounded mean oscillation},
Complex Function Spaces (Mekrij{\"a}rvi, 1999),
Univ.\ Joensuu Dept.\ Math.\ Rep.\ Ser.\ 4 (2001), pp.\ 61--170.

\bibitem{KX}
S.V.~Kisljakov and Q.~Xu, \emph{Interpolation of weighted and vector-valued Hardy spaces},
Trans.\ Amer.\ Math.\ Soc.\ 343 (1994), 1--34.

\bibitem{K}
M.A.~Krasnoselskii, \emph{On a theorem of M. Riesz}, Soviet\ Math.\ Dokl.\ 1 (1960), 
229--231.

\bibitem{LNST}
J.~Laitila, P.J.~Nieminen, E.~Saksman and H.-O.~Tylli,
\emph{Rigidity of composition operators on the Hardy space $H^p$},
Adv.\ Math.\ 319 (2017), 610--629.

\bibitem{LiSte}
S.~Li and S.~Stevi\'{c}, \emph{Products of Volterra type operator and composition operator
from  $H^\infty$ and Bloch spaces to Zygmund spaces}, J.\ Math.\ Anal.\ Appl.\ 345 (2008),
40--52.

\bibitem{LT77}
J.~Lindenstrauss and L.~Tzafriri, \emph{Classical Banach spaces I. Sequence spaces},
Springer, 1977.

\bibitem{Miihkinen}
S.~Miihkinen, \emph{Strict singularity of a Volterra-type integral operator
on $H^p$}, Proc.\ Amer.\ Math.\ Soc.\ 145 (2017), 165--175.

\bibitem{Pietsch}
A.~Pietsch, \emph{Operator Ideals}, North Holland, 1980.

\bibitem{Rud55}
W.~Rudin, \emph{The radial variation of analytic functions}, Duke Math.\ J.\ 22 (1955),
235--242.

\bibitem{Siskakis87}
A.G.~Siskakis, \emph{Composition semigroups and the Ces\`aro operator on $H^p$}, J.\ London
Math.\ Soc.\ 36 (1987), 153--164.

\bibitem{Siskakis90}
A.G.~Siskakis, \emph{The Ces\`aro operator is bounded on $H^1$}, Proc.\ Amer.\ Math.\ Soc.\
110 (1990), 461--462.

\bibitem{Siskakis04}
A.G.~Siskakis, \emph{Volterra operators on spaces of analytic functions---a survey},
First Advanced Course in Operator Theory and Complex Analysis (Sevilla, 2004), pp.\
51--68.

\bibitem{Weis}
L.~Weis, \emph{On perturbations of Fredholm operators in $L_p(\mu)$-spaces},
Proc.\ Amer.\ Math.\ Soc.\ 67 (1977), 287--292.

\end{thebibliography}
\end{document}